\documentclass[a4paper, 10pt]{article}
\usepackage[nottoc]{tocbibind}
\usepackage{amssymb, latexsym}
\usepackage{amsmath}
\usepackage{amsthm}
\usepackage{color}
\usepackage[utf8]{inputenc}
\usepackage{hyperref}
\usepackage{enumitem}
\usepackage{layout}
\usepackage[a4paper, textwidth=375pt]{geometry}

\allowdisplaybreaks

\righthyphenmin = 2
\lefthyphenmin = 2

\setlength{\parskip}{0.7em}
\hyphenation{Where-as}

\newcommand{\dd}{\mathrm{d}} 
\newcommand{\R}{\mathbb{R}} 
\newcommand{\ph}{\varphi} 
\newcommand{\norm}[1]{\Vert#1\Vert}  
\newcommand{\dist}{\mathrm{dist}} 
\newcommand{\Sp}{\mathcal{S}} 
\newcommand{\abs}[1]{\left|#1\right|} 
\newcommand{\leqs}{\leqslant} 
\newcommand{\geqs}{\geqslant} 
\newcommand{\ep}{\varepsilon} 
\newcommand{\subs}{\subseteq} 
\newcommand{\p}{\partial} 

\newcommand{\hf}{\frac{1}{2}}
\newcommand{\D}{\mathbb{D}}

\theoremstyle{plain}
\newtheorem{thm}{Theorem}[section]
\newtheorem{lem}[thm]{Lemma}

\newtheorem{prop}[thm]{Proposition}

\theoremstyle{definition}

\theoremstyle{remark}

\numberwithin{equation}{section}

\begin{document}
	\title{Uniqueness of Weak Solutions of the Plateau Flow}
	\author{Christopher Wright\footnote{Mathematical Institute, University of Oxford. Email: christopher.wright@maths.ox.ac.uk}}
	\maketitle

\begin{abstract}
	In this paper, we study the uniqueness of weak solutions of the Plateau flow, which was first introduced by Wettstein as a half-Laplacian heat flow and recently studied by Struwe using alternative methods. This geometric gradient flow is of interest due to its links with free boundary minimal surfaces and the Plateau problem. We obtain uniqueness of weak solutions of this flow under a natural condition on the energy, which answers positively a question raised by Struwe.
\end{abstract}
\section{Introduction}
In this paper, we consider the uniqueness of solutions to a gradient flow of maps from the unit circle $\Sp^1$ into a closed manifold of any dimension, $(N,h)$, embedded smoothly into $\R^n$. This flow was introduced and studied by Wettstein in \cite{Wettstein_1,Wettstein_2,Wettstein_3} as the equation
\begin{equation}\label{eq_wettstein_flow}
	\p_t u + P_u\left((-\Delta)^\hf u\right) = 0 \text{ on }\Sp^1\times[0,T)
\end{equation}
where $P_x$ is the orthogonal projection of $\R^n$ onto the tangent space of $N$ at $x$. This is the $L^2$-gradient flow of the \emph{half-energy}
\begin{equation*}
	E_{\hf}(u) := \hf\int_{\Sp^1}\abs{(-\Delta)^{\frac{1}{4}}u}^2\dd s
\end{equation*}
within the function class $u \in H^\hf(\Sp^1;N)$. Note that this is an extrinsic energy, depending on the choice of embedding $N \hookrightarrow \R^n$. Critical points of this energy are called \emph{half-harmonic maps}, and are characterised as solutions to
\begin{equation*}
	P_u\left((-\Delta)^\hf u\right) = 0 \text{ on } \Sp^1.
\end{equation*}
The half-energy and half-harmonic maps were first introduced by Da Lio and Rivière in \cite{Da_Lio_Riviere_half_harmonic_intro}, and a special case of the gradient flow is studied by Sire, Wei and Zheng in \cite{Sire_Wei_Zheng_flow_paper}. The geometric motivation behind studying half-harmonic maps comes from their connection with free boundary minimal surfaces. It is observed in \cite{Milot_Sire_frac_lap_representation} and in \cite{Da_Lio_Martinazzi_Riviere} that if $u:\p\D \rightarrow N \subs \R^n$ is a non-constant half-harmonic map, where $\D \subs \R^2$ is the open unit disc, then the harmonic extension $u:\overline{\D} \rightarrow \R^n$ parametrises a free boundary branched minimal immersion.

In \cite{Wettstein_2,Wettstein_3}, Wettstein established the existence of a weak solution $u \in H_{\text{loc}}^1([0,\infty);L^2(\Sp^1;N))\cap L^\infty([0,\infty);H^\hf(\Sp^1;N))$ to \eqref{eq_wettstein_flow} for two cases. First, in the case of an arbitrary target $(N,h)$, existence of weak solutions is established for initial data with small energy. Second, if the target manifold is a sphere, then existence of a weak solution is established for arbitrary initial data. Moreover, these solutions have non-increasing energy and are smooth away from finitely many singular times. Wettstein also obtained a uniqueness result in \cite{Wettstein_2}, but this applies only to solutions with small energy throughout, leaving open the question of uniqueness of general weak solutions.

In \cite{Struwe_Plateau_Paper}, Struwe studies this geometric flow using very different techniques. Whereas Wettstein uses the methods of fractional calculus, Struwe uses the alternative characterisation of the half-Laplacian as
\begin{equation}\label{eq_half_laplace_D_to_N}
	(-\Delta)^\hf u = \p_\nu u
\end{equation}
where $\p_\nu$ is the Dirichlet-to-Neumann operator associated to the Laplacian on $\D$. This has previously been used for example by Millot and Sire in \cite{Milot_Sire_frac_lap_representation} and Moser in \cite{Moser_semiharmonic_maps} and is a special case of a more general theory of representing fractional operators, developed by Caffarelli and Silvestre in \cite{Caffarelli_Silvestre_Fraction_to_Local_Representation}. The relation \eqref{eq_half_laplace_D_to_N} transforms the gradient flow to the following, 
\begin{equation}\label{eq_plateau_flow_strong}
	\p_tu + P_u(\p_\nu u) = 0 \text{ on } \p\D\times[0,T)
\end{equation}
which is what Struwe defines as the Plateau flow. One advantage of this approach is that it allows for the localisation of many of the arguments by studying the harmonic extension of a map defined on $\p\D$ to all of $\overline{\D}$. For this reason, we do not distinguish in notation between a function defined on $\p\D$ and its harmonic extension defined on $\overline{\D}$. This also allows the half-energy to be replaced by the better understood Dirichlet energy
\begin{equation*}
	E(u) := \hf\int_{\D}\abs{\nabla u}^2\dd x
\end{equation*}
where equation (1.5) in \cite{Struwe_Plateau_Paper} shows that this is equal to the half energy in this setting. We refer to \cite{Struwe_Plateau_Paper} for further details behind the flow and the motivation for using Struwe's approach. One detail we note is that Struwe assumes that $(N,h)$ is embedded into $\R^n$ in such a way that the normal bundle is parallelisable. Struwe makes this assumption so as to make use of a globally defined function $\dist_N : \R^n \rightarrow \R^m$ (defined in Section 1.9 of \cite{Struwe_Plateau_Paper}). This assumption is satisfied in particular for hypersurfaces and curves in $\R^n$. Since we build upon Struwe's results, we also make this restriction.

Struwe obtained the existence of weak solutions to \eqref{eq_plateau_flow_strong} with non-increasing energy for arbitrary initial data $u_0 \in H^\hf(\p\D;N)$. Moreover, Struwe gave a more detailed description of the regularity at singular times, ensuring that his solution is smooth for $t>0$ except at finitely many spacetime points. Further, Struwe obtained a uniqueness result for arbitrary initial data, however with extra regularity of the flow assumed. In his paper, Struwe states the result for solutions which are smooth for positive time. However, the proof works for the following statement if we interpret quantities in a suitable weak sense. 

\begin{thm}[{\cite[Theorem 7.1]{Struwe_Plateau_Paper}}]\label{thm_Struwe_uniqueness_weakened}
	Let  $u,v$ be weak solutions to the Plateau Flow \eqref{eq_plateau_flow_strong} on the time interval $[0,T)$, $T \leqs \infty$, with the same initial data $u_0 \in H^\hf(\p\D;N)$. Suppose that additionally $u,v \in H^1(\p\D\times[0,T);\R^n)$ (i.e. we also have $\p_s u,\p_s v \in L^2(\p\D\times[0,T);\R^n)$). Then $u = v$.
\end{thm}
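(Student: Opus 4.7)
My strategy is to derive a Grönwall-type differential inequality for
\begin{equation*}
	F(t) := \hf\int_{\p\D}|(u-v)(\cdot,t)|^2\,\dd s,
\end{equation*}
and exploit $F(0) = 0$. Writing $w := u - v$ and subtracting the two instances of \eqref{eq_plateau_flow_strong}, one obtains
\begin{equation*}
	\p_t w + P_u(\p_\nu w) + (P_u - P_v)(\p_\nu v) = 0 \qquad \text{on } \p\D \times [0,T),
\end{equation*}
since $w$ is harmonic in $\D$ as the difference of two harmonic extensions, and so $\p_\nu u - \p_\nu v = \p_\nu w$. Testing against $w$, using the symmetry of $P_u$ together with $P_u w = w - (I - P_u)w$, and applying Green's identity to the harmonic function $w$ via $\int_{\p\D} w \cdot \p_\nu w\,\dd s = \int_{\D}|\nabla w|^2\,\dd x$, I obtain
\begin{equation*}
	F'(t) + \int_{\D}|\nabla w|^2\,\dd x = \int_{\p\D}(I - P_u)w \cdot \p_\nu w\,\dd s - \int_{\p\D} w \cdot (P_u - P_v)(\p_\nu v)\,\dd s.
\end{equation*}

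To control the right-hand side I would use the standard geometric estimates (both pointwise consequences of the smoothness of $N \subs \R^n$ together with $u,v \in N$ almost everywhere)
\begin{equation*}
	|(I - P_u)(u - v)| \leqs C|u-v|^2, \qquad |P_u - P_v| \leqs C|u-v|,
\end{equation*}
which, together with $|\p_\nu w| \leqs |\p_\nu u| + |\p_\nu v|$, bound the right-hand side by $C\int_{\p\D}|w|^2(|\p_\nu u| + |\p_\nu v|)\,\dd s$. To absorb this into the Dirichlet integral on the left I would invoke the one-dimensional interpolation $\|w\|_{L^4(\p\D)}^2 \leqs C\|w\|_{L^2(\p\D)}\|w\|_{H^\hf(\p\D)}$, the equivalence $\|w\|_{H^\hf(\p\D)}^2 \simeq \int_{\D}|\nabla w|^2\,\dd x$ valid for harmonic $w$, Cauchy-Schwarz, and Young's inequality with a small parameter. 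After choosing the parameter appropriately this yields
\begin{equation*}
	F'(t) + \hf\int_{\D}|\nabla w|^2\,\dd x \leqs C\, F(t)\, g(t), \qquad g(t) := \|\p_\nu u(\cdot,t)\|_{L^2(\p\D)}^2 + \|\p_\nu v(\cdot,t)\|_{L^2(\p\D)}^2.
\end{equation*}

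The hypothesis of the theorem enters precisely here: a Fourier expansion on $\p\D$ gives $\|\p_\nu f\|_{L^2(\p\D)} = \|\p_s f\|_{L^2(\p\D)}$ for any $f$ harmonically extended to $\D$, so the assumption $\p_s u,\p_s v \in L^2(\p\D \times [0,T))$ ensures $g \in L^1_{\mathrm{loc}}([0,T))$. Grönwall's inequality together with $F(0) = 0$ then forces $F \equiv 0$, i.e.\ $u = v$. The main obstacle I anticipate is not the formal computation but its rigorous justification under merely weak regularity: the time-differentiation of $F$ and the boundary integration by parts require $\p_\nu u,\p_\nu v$ to be honest $L^2(\p\D)$-functions for almost every $t$, which a generic weak solution only in $L^\infty H^\hf$ does not provide. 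The Fourier identity above combined with a standard approximation argument is exactly the device that upgrades the extra tangential regularity into a meaningful boundary trace of $\p_\nu w$ and permits the whole calculation to be carried out rigorously.
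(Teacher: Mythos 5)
Your plan is correct and follows essentially the same route as the proof this paper relies on: the paper does not reprove Theorem \ref{thm_Struwe_uniqueness_weakened} but cites Struwe's Theorem 7.1, whose argument is exactly your Gr\"onwall scheme for $\hf\norm{u-v}_{L^2(\p\D)}^2$ — splitting the nonlinearity via $P_u(\p_\nu w)+(P_u-P_v)(\p_\nu v)$, using the tangency estimates $\abs{(I-P_u)(u-v)}\leqs C\abs{u-v}^2$ and $\abs{P_u-P_v}\leqs C\abs{u-v}$, interpolating $L^4$ between $L^2$ and $H^\hf$, and feeding the hypothesis in through $\norm{\p_\nu f}_{L^2(\p\D)}=\norm{\p_s f}_{L^2(\p\D)}$ for harmonic extensions. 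You also correctly identify that the extra $H^1$ assumption is what makes $\p_\nu u(t),\p_\nu v(t)$ genuine $L^2$ functions for a.e.\ $t$, which is precisely the "suitable weak sense" the paper alludes to when weakening Struwe's smoothness hypothesis.
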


Since the flow can be uniquely extended past each singular time, this provides uniqueness for global solutions which have regularity $H^1(\p\D\times(T_1,T_2);\R^n)$ between the singular times. Following this, and by comparison with the theory of harmonic map flow, it is natural to consider if we can obtain uniqueness of solutions within a more general class of functions. Indeed, Struwe directly raises the question of whether uniqueness holds in the larger space of weak solutions with non-increasing energy. We can answer this question positively in the following theorem, which in fact also allows for small increases in energy. Here and after, we call the solution obtained by Wettstein/Struwe the \emph{almost smooth solution} associated to a particular initial map.

\begin{thm}\label{thm_uniqueness_new_1}
	Let $(N,h)$ be a smooth manifold which is smoothly embedded into $\R^n$ with parallelisable normal bundle. There exists $\ep_0 > 0$, depending only on $(N,h)$, such that if $u$ is a weak solution to the Plateau flow \eqref{eq_plateau_flow_strong} defined on the time interval $[0,T)$, $T \leqs \infty$, satisfying
	\begin{equation}\label{ineq_small_energy_jump_condition}
		\limsup\limits_{s\searrow t} E(u(s)) < E(u(t)) + \ep_0
	\end{equation}
	for all $t\in [0,T)$, then $u$ is equal to the almost smooth solution with the same initial data. 
\end{thm}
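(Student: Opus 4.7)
The plan is to compare $u$ with the almost smooth solution $v$ associated to the same initial data $u_0$, exploiting the fact that $v$ has only finitely many singular times $0 < t_1 < \ldots < t_K < T$ and is smooth on the complement. Since both $u$ and $v$ lie in $H^1_{\text{loc}}([0,T);L^2(\p\D))$, they belong to $C^0([0,T);L^2(\p\D))$, so it suffices to prove $u = v$ on each interval $[t_{i-1},t_i)$ by induction on $i$: the matching at the singular times propagates by $L^2$-continuity via $u(t_i) = \lim_{s\nearrow t_i} u(s) = \lim_{s\nearrow t_i} v(s) = v(t_i)$, reducing the problem to uniqueness on an interval on which $v$ is smooth.

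On such an interval I would set $w := u-v$, work throughout with harmonic extensions to $\D$, and differentiate to obtain
\[
\hf\frac{\dd}{\dd t}\|w\|_{L^2(\p\D)}^2 = -\int_{\p\D}\bigl(P_u(\p_\nu u)-P_v(\p_\nu v)\bigr)\cdot w\,\dd s.
\]
The key algebraic step is the decomposition $P_u(\p_\nu u)-P_v(\p_\nu v) = \p_\nu w - \bigl[(I-P_u)(\p_\nu u)-(I-P_v)(\p_\nu v)\bigr]$. Combining this with the harmonic-extension identity $\int_{\p\D}\p_\nu w\cdot w\,\dd s = \|\nabla w\|_{L^2(\D)}^2$ leads to
\[
\hf\frac{\dd}{\dd t}\|w\|_{L^2(\p\D)}^2 + \|\nabla w\|_{L^2(\D)}^2 = \int_{\p\D}\bigl[(I-P_u)(\p_\nu u)-(I-P_v)(\p_\nu v)\bigr]\cdot w\,\dd s.
\]
The normal-component integrand on the right can be expressed via the second fundamental form of $N$ evaluated at $u$ and $v$; using the smoothness of $v$, the uniform $H^\hf$-bound for $u$, and writing the difference as a term linear in $w$ with bounded coefficients, the right-hand side is controlled by $C(v)\|w\|_{L^2(\p\D)}^2$ plus a piece that can be absorbed into the Dirichlet term on the left by Young's inequality. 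Grönwall's lemma then forces $w \equiv 0$.

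The hypothesis \eqref{ineq_small_energy_jump_condition} enters at two points. First, $\ep_0$ should be chosen below the ``bubble threshold'' $\ep^*$ of the flow so that, combined with an $\ep$-regularity argument adapted from Wettstein's small-energy existence and regularity theory, the condition prevents $u$ from spontaneously developing a new bubble at some $t_* \notin \{t_1,\ldots,t_K\}$ where $v$ remains smooth. Second, the one-sided control on forward energy jumps ensures that at each singular time $t_i$ of $v$, where $v$ loses a definite amount of energy through bubbling, $u$ cannot acquire compensating energy that would let it separate from $v$ immediately after $t_i$. Together these force $u$ to have the same singular set as $v$ and to coincide with $v$ on the smooth intervals.

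The main obstacle I anticipate is executing the energy comparison in the regularity class $u \in L^\infty([0,T);H^\hf)\cap H^1_{\text{loc}}([0,T);L^2)$, where $u$ is not known to be $H^1$ in spacetime, so Theorem \ref{thm_Struwe_uniqueness_weakened} cannot be applied directly. The displayed identity must be justified in a weak sense, and the bilinear cross term involving the second fundamental forms must be estimated using only the $H^\hf$ spatial regularity of $u$; I expect this to be overcome by systematically using the smoothness of $v$ as a test function and exploiting the linear-in-$w$ structure of the projection difference, possibly via an interpolation or duality argument between $H^\hf$ and $H^{-\hf}$ on $\p\D$.
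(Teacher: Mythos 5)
Your proposal attempts a direct Gr\"onwall comparison between $u$ and the almost smooth solution $v$, which is essentially a re-run of Struwe's proof of Theorem \ref{thm_Struwe_uniqueness_weakened} but now for a $u$ that is only known to lie in $L^\infty([0,T);H^\hf)\cap H^1_{\mathrm{loc}}([0,T);L^2)$. The obstacle you flag at the end is not a technicality to be deferred --- it is the whole problem, and your proposal does not overcome it. The term $(I-P_u)(\p_\nu u)$ is a product of the projection matrix $P_u$ (only $H^\hf\cap L^\infty$ in $s$) with $\p_\nu u$, which for $u(t)\in H^\hf(\p\D;N)$ is merely an $H^{-\hf}$ distribution; there is no ``linear in $w$ with bounded coefficients'' structure available, because the coefficients themselves involve $\p_\nu u$. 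Controlling such products is exactly why Struwe's uniqueness theorem carries the hypothesis $\p_s u\in L^2$ in spacetime, and no interpolation or duality between $H^\hf$ and $H^{-\hf}$ will recover that estimate for free. Your intended use of the hypothesis \eqref{ineq_small_energy_jump_condition} (``below the bubble threshold'', ``cannot acquire compensating energy'') is also left entirely heuristic: you never convert the energy condition into an analytic estimate that feeds into the Gr\"onwall inequality.

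The paper closes precisely this gap by a different mechanism: instead of comparing $u$ with $v$ directly, it upgrades the regularity of $u$ and then \emph{cites} Theorem \ref{thm_Struwe_uniqueness_weakened}. The hypothesis \eqref{ineq_small_energy_jump_condition} is used quantitatively, via Lemma \ref{lem_no_energy_Concentration}, to show that for each $t_0$ there are $r>0$ and $t_1>t_0$ with $\sup_{x}E(u(t);B_r(x))\leqs\ep_0$ on $[t_0,t_1]$ (the proof uses lower semicontinuity of local energy under the weak $H^1$ convergence of harmonic extensions coming from $L^2$-continuity in time). Here $\ep_0$ is the constant from the quantitative $\ep$-regularity estimate, Proposition \ref{prop_quantative_static_H1_regularity_global}, which --- applied slice by slice with $f=-\p_t u(t)\in L^2$, after first invoking Proposition \ref{prop_Wettstein_static_regularity} to know $u(t)\in H^1(\p\D)$ for a.e.\ $t$ --- yields $\norm{\p_s u(t)}_{L^2}^2\leqs C(\norm{\p_t u(t)}_{L^2}^2+E(u(t)))$ and hence, after integration in $t$, the spacetime $H^1$ regularity required by Theorem \ref{thm_Struwe_uniqueness_weakened}. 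If you want to salvage your route, you would still need these two $\ep$-regularity inputs to make sense of and estimate the normal-component term, at which point you may as well invoke the existing conditional uniqueness theorem as the paper does.
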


This result provides a parallel with the theory of harmonic map flow. In that setting, Struwe constructed a global weak solution in \cite{Struwe_HMF_paper} with a uniqueness result similar to Theorem \ref{thm_Struwe_uniqueness_weakened}, in that uniqueness is obtained for solutions under additional regularity assumptions. However, it was subsequently shown by Freire in \cite{Freire_HMF_Uniqueness} that uniqueness holds in a weaker class of solutions subject only to the condition that the energy is non-increasing. It is perhaps surprising that energy monotonicity needs to be assumed at all for a gradient flow, but the existence of backwards bubbles due to Topping, \cite{Topping_Reverse_Bubbling}, shows that energy monotonicity can fail if we look in the class of weak solutions with no additional assumptions. Following this, Rupflin strengthened Freire's result in \cite{Rupflin_Harmonic_Map_Flow_Uniqueness} to show that uniqueness holds even when we allow for small increases in energy, with the smallness needed to exclude backwards bubbles. We adapt the methods of Rupflin for our proofs below, and note that our results provide analogues of Theorems 1.1 and 1.2 from \cite{Rupflin_Harmonic_Map_Flow_Uniqueness} in the Plateau flow setting.

Our Theorem \ref{thm_uniqueness_new_1} is enough to cover the case of weak solutions with non-increasing energy, but since we do not have any lower bound on the value of $\ep_0$, we cannot use this result to conclude that non-uniqueness must be caused by backwards bubbling. To get closer to this, we prove the following slightly strengthened theorem. Note that $\ep^* \geqs \ep_0$, so the assumptions are indeed weaker than in Theorem \ref{thm_uniqueness_new_1}.

\begin{thm}\label{thm_uniqueness_new_2}
	Let $(N,h)$ be a smooth manifold which is smoothly embedded into $\R^n$ with parallelisable normal bundle. Let $\ep^* > 0$ be defined as
	\begin{equation*}
		\ep^* := \inf\lbrace E(u) : u:\p\D \rightarrow N \text{ non-constant and half-harmonic}\rbrace
	\end{equation*}
	Suppose $u$ is a weak solution to the Plateau flow \eqref{eq_plateau_flow_strong} on the time interval $[0,T)$, $T\leqs\infty$, satisfying the two conditions
	\begin{equation}\label{ineq_bubble_energy_jump_condition}
		\limsup\limits_{s\searrow t} E(u(s)) < E(u(t)) + \ep^*
	\end{equation}
	for all $t \in [0,T)$ and that the set
	\begin{equation}\label{est_finite_bad_jumps}
		S := \{t \in [0,T): \limsup\limits_{s\searrow t} E(u(s)) \geqs E(u(t)) + \ep_0\}
	\end{equation}
	has no accumulation points, where $\ep_0$ is the constant from Theorem \ref{thm_uniqueness_new_1}. Then $u$ is equal to the almost smooth solution with the same initial data.
\end{thm}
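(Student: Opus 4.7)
Let $v$ denote the almost smooth solution with initial data $u_0$, and set
$t^* := \sup \lbrace t \in [0,T) : u(s) = v(s) \text{ for all } s \in [0,t] \rbrace$.
The plan is to show $t^* = T$. The defining set contains $0$ and is closed in $[0,T)$ by the $L^2$-continuity in time of both $u$ and $v$ (which follows from $\p_t u, \p_t v \in L^2_{\mathrm{loc}}$), so $u = v$ on $[0, t^*]$ and in particular $u(t^*) = v(t^*)$. Assuming $t^* < T$ for contradiction, the strategy is to extend the coincidence of $u$ and $v$ past $t^*$, which splits naturally into the two cases $t^* \notin S$ and $t^* \in S$.

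If $t^* \notin S$, the extension follows almost immediately from Theorem \ref{thm_uniqueness_new_1}. Using the no-accumulation hypothesis, I would pick $\eta > 0$ with $S \cap [t^*, t^* + \eta) = \emptyset$; then the jump bound with $\ep_0$ holds at every point of $[t^*, t^* + \eta)$, and Theorem \ref{thm_uniqueness_new_1} applied to $u|_{[t^*, t^* + \eta)}$ identifies it as the almost smooth solution starting from $u(t^*)$. That solution must in turn agree with $v|_{[t^*, t^* + \eta)}$, which is an almost smooth solution from $v(t^*) = u(t^*)$, satisfying the hypothesis of Theorem \ref{thm_uniqueness_new_1} trivially via its non-increasing energy. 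This contradicts the maximality of $t^*$.

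The main obstacle is the case $t^* \in S$, where Theorem \ref{thm_uniqueness_new_1} cannot be applied directly starting at $t^*$ since the jump there can lie in $[\ep_0, \ep^*)$. The plan is to pick $\eta > 0$ with $S \cap (t^*, t^* + \eta) = \emptyset$ and a sequence $s_n \searrow t^*$ in $(t^*, t^* + \eta)$, and apply Theorem \ref{thm_uniqueness_new_1} to $u|_{[s_n, t^* + \eta)}$ to identify this restriction as the almost smooth solution $w_n$ starting from $u(s_n)$ at time $s_n$. By $L^2$-continuity, $u(s_n) \to v(t^*)$ in $L^2(\p \D)$. I would then extract a weak subsequential limit $w_\infty$ of $(w_n)$ using the uniform energy bound inherited from the flow, and show that $w_\infty$ is a weak solution of the Plateau flow on $[t^*, t^* + \eta)$ starting from $v(t^*)$ with non-increasing energy. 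Theorem \ref{thm_uniqueness_new_1} applied to $w_\infty$ then forces $w_\infty = v|_{[t^*, t^* + \eta)}$. Since $u = w_n$ on $[s_n, t^* + \eta)$, passing to the limit and invoking $L^2$-continuity at $t^*$ yields $u = v$ on $[t^*, t^* + \eta)$, contradicting the definition of $t^*$.

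The hardest step is the compactness and identification of the limit $w_\infty$. Passing to the limit in the nonlinear projection term $P_u(\p_\nu u)$ requires strong convergence results that exploit Struwe's harmonic extension viewpoint, and one must rule out energy concentration during the limit $s_n \searrow t^*$. The hypothesis that the jump at $t^*$ is strictly less than $\ep^*$, the minimum energy of a non-constant half-harmonic map, is crucial precisely here: any bubble arising in the limiting process would carry at least this quantum of energy and would therefore be incompatible with the assumed jump bound at $t^*$. This mirrors the corresponding step in Rupflin's \cite{Rupflin_Harmonic_Map_Flow_Uniqueness} treatment of harmonic map flow uniqueness.
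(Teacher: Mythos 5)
Your overall skeleton (a maximal-time argument, treating times in $S$ and outside $S$ separately, with the $\ep^*$ quantum ruling out backwards bubbling at times of $S$) matches the paper's strategy, and the case $t^*\notin S$ is handled correctly. But the case $t^*\in S$, which is the entire content of the theorem, is only gestured at, and the mechanism you propose for it does not work as stated. First, note that there is no compactness to be done in identifying $w_\infty$: since $u=w_n$ on $[s_n,t^*+\eta)$ and $s_n\searrow t^*$, the putative limit is simply $u$ itself on $(t^*,t^*+\eta)$. The whole difficulty is therefore concentrated in your claim that this limit has \emph{non-increasing energy} on $[t^*,t^*+\eta)$, i.e.\ that $\limsup_{s\searrow t^*}E(u(s))\leqs E(u(t^*))$. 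That is precisely the negation of $t^*\in S$, so it is the hard statement you must prove, and "ruling out bubbles" does not deliver it directly: for a sequence $u(s_n)\rightharpoonup u(t^*)$ in $H^1(\D)$, absence of non-constant bubbles only controls the energy defect if you also have an energy identity (no loss in necks or in diffuse form) for almost half-harmonic sequences. You neither prove nor cite such an identity, and it is a strictly stronger input than what the hypothesis \eqref{ineq_bubble_energy_jump_condition} buys you.

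The paper avoids this entirely by aiming at a different intermediate statement: it shows that the $\ep^*$ bound forbids \emph{local energy concentration}, i.e.\ it produces $r>0$ and $t_1>t^*$ with $\sup_{x,\,t\in[t^*,t_1]}E(u(t);B_r(x))\leqs\ep_0$, arguing by contradiction via the bubbling Proposition \ref{prop_bubble_formation} (whose application itself requires selecting good smooth time slices $\tilde t_k$ with $r_k^{1/2}\norm{P_u(\p_\nu u(\tilde t_k))}_{L^2}\to 0$ and transporting the concentration to those slices via the local energy estimate of Lemma \ref{lem_local_energy_bound} --- steps absent from your sketch). Non-concentration then feeds into Propositions \ref{prop_Wettstein_static_regularity} and \ref{prop_quantative_static_H1_regularity_global} to give $u\in H^1(\p\D\times[t^*,t_1])$, at which point Struwe's Theorem \ref{thm_Struwe_uniqueness_weakened} applies --- and crucially that theorem needs no energy monotonicity whatsoever. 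To repair your argument you should replace "show $w_\infty$ has non-increasing energy and apply Theorem \ref{thm_uniqueness_new_1}" by this non-concentration $\Rightarrow$ spacetime $H^1$ regularity $\Rightarrow$ Theorem \ref{thm_Struwe_uniqueness_weakened} route, and supply the time-slice selection and local energy control needed to invoke the bubbling proposition.
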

Note that the second condition \eqref{est_finite_bad_jumps} is satisfied in particular if $t \mapsto E(t)$ has locally finite total variation.

This paper is structured as follows. In Section \ref{sec_thm_1}, we provide the proof of Theorem \ref{thm_uniqueness_new_1}. For this, we recall some regularity results of Wettstein and Struwe. We then combine these with estimates on the concentration of energy to show that under the conditions of Theorem \ref{thm_uniqueness_new_1}, the solution has the regularity required by Theorem \ref{thm_Struwe_uniqueness_weakened}. Then in Section \ref{sec_thm_2}, we prove Theorem \ref{thm_uniqueness_new_2} using the analysis of bubble formation from \cite{Struwe_Plateau_Paper} to show that concentration of energy backwards in time causes a half-harmonic bubble to form.

\paragraph{Acknowledgements}
I would like to thank my supervisor Melanie Rupflin for her support and advice and also Michael Struwe for insightful discussions on this problem. This research was supported by the Engineering and Physical Sciences Research Council. 

\section{Proof of Theorem \ref{thm_uniqueness_new_1}}\label{sec_thm_1}
First, let us set out some conventions and notations for the remainder of the paper. By a \emph{weak solution} of the Plateau flow, we mean a function in $H_{\text{loc}}^1([0,T);L^2(\Sp^1;N))\cap L^\infty([0,T);H^\hf(\Sp^1;N))$ which weakly solves the equation \eqref{eq_plateau_flow_strong}, where $T \in (0,\infty]$. We use the notation 
\begin{equation*}
	E(u;\Omega) := \hf\int_{\Omega\cap \D}\abs{\nabla u}^2\dd x
\end{equation*}
for the energy of $u \in H^1(\D)$ on a subset $\Omega \subs \R^2$. Also we write $B_r(x)$ for an open ball centred at $x$ with radius $r$.

Next we state some existing regularity results from the literature which we make use of below. First of all, we have the following qualitative $H^1$ regularity result due to Wettstein. Note that this result is originally stated with the half-Laplacian, but by \eqref{eq_half_laplace_D_to_N}, this is equivalent to the Dirichlet-to-Neumann operator that we use. 
\begin{prop}[{\cite[Lemma 3.8]{Wettstein_1}},{\cite[Proposition 4.1]{Wettstein_2}}]\label{prop_Wettstein_static_regularity}
	Suppose that $u \in H^\hf(\p\D;N)$ and $f \in L^2(\p\D;\R^n)$ satisfy
	\begin{equation}\label{eq_inhomogeneous}
		P_u(\p_{\nu} u) = f.
	\end{equation}
	Then $u \in H^1(\p\D;N)$.
\end{prop}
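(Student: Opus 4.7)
The plan is to exploit the fact that, since $u$ is identified with its harmonic extension to $\overline{\D}$, the operator $\p_\nu$ coincides with $(-\Delta)^{\hf}$ on $\p\D$ and is an isomorphism $H^1(\p\D) \to L^2(\p\D)$. Thus it suffices to prove $\p_\nu u \in L^2(\p\D;\R^n)$. Decomposing $\p_\nu u = P_u(\p_\nu u) + (\id - P_u)(\p_\nu u)$, the tangential part equals $f$ and lies in $L^2(\p\D)$ by assumption, so the task reduces to controlling the normal part in $L^2$.

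Using the parallelisability assumption, I would fix a smooth global orthonormal frame $\nu_1,\dots,\nu_m$ for the normal bundle of $N$ in $\R^n$, so that the normal part of $\p_\nu u$ is $\sum_j \inner{\p_\nu u}{\nu_j(u)}\,\nu_j(u)$, and reduce the problem to showing each scalar $\inner{\p_\nu u}{\nu_j(u)}$ lies in $L^2(\p\D)$. The key structural input is the pointwise orthogonality $\inner{\p_s u}{\nu_j(u)} = 0$ on $\p\D$, which is a direct consequence of $u$ taking values in $N$. The challenge is to transfer this local relation into an $L^2$ bound for the non-local quantity $\inner{\p_\nu u}{\nu_j(u)}$. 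I would do this by recasting \eqref{eq_inhomogeneous} as a fractional Rivière-type system $(-\Delta)^{\hf} u = \Omega\, u + f$, where $\Omega$ is an $L^2$ antisymmetric potential built from $DP_u$ (equivalently, from the frame $\nu_j(u)$ together with $Du$), and then invoking the fractional analogue of Rivière's $\ep$-regularity / conservation-law framework developed by Da Lio--Rivière. Alternatively, one can work directly with the three-commutator estimate for $(-\Delta)^{1/4}$ to rewrite $\nu_j(u)\cdot(-\Delta)^{\hf} u$ as a commutator expression bounded into $L^2$, making use of $u,\nu_j(u) \in H^{\hf}$ (the latter by Sobolev composition, since each $\nu_j$ is smooth).

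The principal obstacle is that the normal component of $\p_\nu u$ is generated by the global harmonic extension and has no pointwise algebraic relation to $\p_s u$, so the desired $L^2$ estimate must come from the interplay between the antisymmetric (or three-commutator) cancellation structure and the boundary constraint $u \in N$; a naive decomposition only yields $H^{-\hf}$ control on this piece. Once both the tangential and normal parts of $\p_\nu u$ are shown to lie in $L^2(\p\D)$, the identification of $\p_\nu$ with $(-\Delta)^{\hf}$ closes the argument and gives $u \in H^1(\p\D;N)$.
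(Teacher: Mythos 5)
The paper does not prove this proposition itself --- it is imported from Wettstein (with the general case extracted from \cite[Proposition 4.1]{Wettstein_2} by replacing $\p_t u$ with $f$), and that cited proof proceeds exactly along the lines you describe: split $\p_\nu u = (-\Delta)^{\hf}u$ into its tangential part, which is $f \in L^2$, and its normal part, and control the latter using the constraint $u \in N$ together with the Da Lio--Rivi\`ere three-commutator / fractional Rivi\`ere machinery, so your proposal is essentially the same approach as the source. The one technical point your sketch underplays is that the $L^2$ bound on the normal component does not come from the commutator structure alone: it ultimately rests on the pointwise second-order tangency $\abs{\inner{\nu_j(u(x))}{u(x)-u(y)}} \leqs C\abs{u(x)-u(y)}^2$ inserted into the singular-integral kernel of $(-\Delta)^{\hf}$, which bounds the normal part by $\int \abs{u(x)-u(y)}^2\abs{x-y}^{-2}\dd y$, together with a preliminary regularity gain for $u$ beyond $H^{\hf}$ (obtained from the commutator estimates and $f\in L^2$) that makes this quadratic Gagliardo term square-integrable --- a two-step bootstrap consistent with your own observation that the naive decomposition only yields $H^{-\hf}$ control.
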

We note that this result is found as \cite[Lemma 3.8]{Wettstein_1} in the case where $N$ is a sphere, and the proof in the general case can be extracted from the proof of \cite[Proposition 4.1]{Wettstein_2} by replacing $\p_t u$ with $f$.

Next we need a quantitative $H^1$ estimate for the same equation. The following, due to Struwe, is originally stated and proved for smooth functions $u$, but as remarked in that paper, the proof works just the same for the statement given below.

\begin{prop}[{\cite[Proposition 3.4]{Struwe_Plateau_Paper}}]\label{prop_quantative_static_H1_regularity_global}
	There exists a constant $\ep_0 > 0$, depending only on $(N,h)$, such that if $u \in H^1(\p\D;N)$ solves \eqref{eq_inhomogeneous} with $f \in L^2(\p\D;\R^n)$ and $0 < r < \hf$ satisfies
	\begin{equation*}
		\sup\limits_{x \in \D}E(u;B_r(x))\leqs \ep_0
	\end{equation*}
	then
	\begin{equation*}
		\norm{\p_s u}_{L^2(\p\D;\R^n)}^2 \leqs 	C\left(\norm{f}_{L^2(\p\D;\R^n)}^2 + E(u)\right)
	\end{equation*}
	where $C$ depends only on $r$.
\end{prop}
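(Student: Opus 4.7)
The strategy is to bound $\norm{\p_\nu u}_{L^2(\p\D)}^2$ and then recover $\norm{\p_s u}_{L^2(\p\D)}^2$ via the Hilbert transform relation between the tangential and normal boundary derivatives of a function harmonic in $\D$, which gives $\norm{\p_s u}_{L^2(\p\D)}^2\leqs\norm{\p_\nu u}_{L^2(\p\D)}^2+CE(u)$. Splitting $\p_\nu u=f+(I-P_u)\p_\nu u$ and expanding the normal component in a smooth orthonormal frame $\{\nu_1,\ldots,\nu_m\}$ of the (parallelisable) normal bundle, the task reduces to controlling each scalar $\phi_\alpha:=\nu_\alpha(u)\cdot\p_\nu u$ in $L^2(\p\D)$ by a constant times $\bigl(\norm{f}_{L^2}^2+E(u)\bigr)^{1/2}$.

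The central tool is an integration by parts exploiting harmonicity of $u$ in $\D$: extending $\nu_\alpha$ smoothly to a tubular neighbourhood of $N$ and choosing any convenient extension $\Phi\in H^1(\D)$ of a test function $\eta\in L^2(\p\D)$, one obtains
\begin{equation*}
	\int_{\p\D}\eta\,\phi_\alpha\,\dd s=\int_{\D}\nabla\bigl(\Phi\,\nu_\alpha(u)\bigr):\nabla u\,\dd x=\int_{\D}\nabla\Phi\cdot\bigl(\nu_\alpha(u)\cdot\nabla u\bigr)\dd x+\int_{\D}\Phi\,(D\nu_\alpha)(\nabla u,\nabla u)\dd x.
\end{equation*}
A covering argument partitions $\overline{\D}$ into balls $B_r(x_i)$ on which $E(u;B_r(x_i))\leqs\ep_0$, reducing the task to a local estimate on each patch.

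On each patch, the small-energy hypothesis enables hole-filling and Caccioppoli iteration for $\Delta u=0$ with the given boundary condition, yielding Morrey decay of $E(u;B_\rho(x))$ and thus $\nabla u\in L^p(\D)$ for some $p>2$ with norm $\leqs C(r)\bigl(E(u)^{1/2}+\norm{f}_{L^2}\bigr)$; by Hölder this controls the quadratic-gradient term by $\norm{\eta}_{L^2}\norm{\nabla u}_{L^4}^2$. The main obstacle is the linear-gradient term, which naively pairs $\eta$ in $H^{1/2}$ rather than $L^2$. To overcome this, I would take $\Phi$ supported in a thin collar of $\p\D$ and exploit the orthogonality $\nu_\alpha(u)\cdot\p_s u=0$ along $\p\D$: this permits a further tangential integration by parts that transfers the derivative off $\eta$ at the cost of purely quadratic-gradient remainders that fall under the Morrey bound. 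Wettstein's qualitative regularity, Proposition \ref{prop_Wettstein_static_regularity}, ensures $\p_s u\in L^2$ a priori so these manipulations are justified, and combining the patch estimates yields the stated inequality.
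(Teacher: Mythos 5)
The paper itself does not reprove this proposition --- it is quoted from \cite[Proposition 3.4]{Struwe_Plateau_Paper} --- so I am comparing your sketch with Struwe's argument. Your skeleton (reduce to the normal component $\phi_\alpha=\nu_\alpha(u)\cdot\p_\nu u$ via $\norm{\p_s u}_{L^2(\p\D)}=\norm{\p_\nu u}_{L^2(\p\D)}$ for harmonic functions, then exploit $\Delta u=0$ and localise) is the right starting point, but the step you lean on hardest is circular. The claimed bound $\norm{\nabla u}_{L^p(\D)}\leqs C(r)\bigl(E(u)^{1/2}+\norm{f}_{L^2}\bigr)$ for some $p>2$ is essentially equivalent to the proposition itself: for harmonic extensions one has $\norm{\nabla u}_{L^4(\D)}^2\lesssim E(u)^{1/2}\norm{u}_{H^1(\p\D)}$ by interpolation, so such a bound already encodes control of $\norm{\p_s u}_{L^2(\p\D)}$. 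It cannot be produced by ``hole-filling and Caccioppoli iteration'' independently of the conclusion, because any boundary Caccioppoli inequality here creates the term $\int_{\p\D}\p_\nu u\cdot(u-\bar u)\ph^2\,\dd s$, whose normal part reintroduces exactly the unknown $\phi_\alpha$. The actual mechanism, and the only place the smallness of $\ep_0$ is needed, is absorption: after localising, the quadratic term is bounded by $C\ep_0^{1/2}\bigl(\norm{\p_s u}_{L^2}+E(u)^{1/2}+\norm{f}_{L^2}\bigr)\norm{\p_s u}_{L^2}$ and the top-order piece is absorbed into the left-hand side. Your write-up instead uses the smallness only to ``enable Morrey decay'', which is not where it enters.

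The treatment of the linear-gradient term is also not viable as described. An $L^2(\p\D)$ function admits no $H^1(\D)$ extension, so the duality pairing is ill-posed from the start; integrating by parts back onto the vector field $V_i=\nu_\alpha(u)\cdot\p_i u$ is useless because $V\cdot\nu=\phi_\alpha$ on $\p\D$ and one recovers a tautology; and your collar variant based on $\nu_\alpha(u)\cdot\p_s u=0$ on $\p\D$ produces second derivatives of $u$ in the interior that you do not control. Struwe's device sidesteps all of this: set $h=\dist_N(u)$, so that $h=0$ on $\p\D$, $\Delta h=D^2\dist_N(u)(\nabla u,\nabla u)$ in $\D$, and $\p_\nu h=(\phi_\alpha)_\alpha$ on $\p\D$; the $H^2$ estimate for this Dirichlet problem gives $\norm{\phi}_{L^2(\p\D)}\lesssim\norm{\nabla u}_{L^4(\D)}^2$ with no linear term and no boundary duality, and the localised interpolation plus absorption then closes the argument. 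I would restructure your proof around that composition rather than the direct testing of $\phi_\alpha$.
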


Now we can give the proof of our first uniqueness result.

\begin{proof}[Proof of Theorem \ref{thm_uniqueness_new_1}]
	Let $u$ be a weak solution of the Plateau flow satisfying \eqref{ineq_small_energy_jump_condition} with $\ep_0$ taken to be the same as the $\ep_0$ from Proposition \ref{prop_quantative_static_H1_regularity_global}. First of all, we note that for almost all times, $u(t)$ is a weak solution to the stationary equation \eqref{eq_inhomogeneous} with $f = -\p_t u(t) \in L^2(\p\D;\R^n)$. Hence, we can apply Proposition \ref{prop_Wettstein_static_regularity} to obtain that for almost all times, $u(t) \in H^1(\p\D;N)$. 
	
	Using this, we would like to apply the quantitative $H^1$ estimate, for which we use the following lemma (based upon the ideas of Lemma 3.3 from \cite{Rupflin_Harmonic_Map_Flow_Uniqueness}).
	\begin{lem}\label{lem_no_energy_Concentration}
		Let $u$ be a weak solution to the Plateau flow \eqref{eq_plateau_flow_strong} on $[0,T)$ and suppose that at some time $t_0 \in [0,T)$, $u$ satisfies
		\begin{equation*}
			\limsup\limits_{s\searrow t_0} E(u(s)) < E(u(t_0)) + \ep
		\end{equation*}
		for some $\ep>0$. Then there exists $t_1 > t_0$ and $r > 0$ such that
		\begin{equation*}
			\sup\limits_{x \in \overline{\D},t_0\leqs t \leqs t_1}E(u(t);B_r(x)) \leqs \ep.
		\end{equation*}
	\end{lem}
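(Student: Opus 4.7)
I plan to prove this by contradiction using a concentration–compactness argument on the gradient energy densities. Two preliminary facts will be used. First, since the harmonic extension $u(t_0) \in H^1(\D)$, absolute continuity of the Lebesgue integral combined with compactness of $\overline{\D}$ gives $\sup_{x \in \overline{\D}} E(u(t_0); B_r(x)) \to 0$ as $r \to 0$. Second, the $\limsup$ hypothesis provides $\delta > 0$ and $\tau \in (t_0, T)$ with $E(u(t)) \leqs E(u(t_0)) + \ep - 2\delta$ for every $t \in (t_0, \tau)$.

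Assume the conclusion fails. Then for each $k \in \N$ there exist $t_k \in [t_0, t_0 + 1/k]$, $x_k \in \overline{\D}$ and $r_k \in (0, 1/k)$ with $E(u(t_k); B_{r_k}(x_k)) > \ep$. The first preliminary fact rules out $t_k = t_0$ for large $k$, so after passing to subsequences one has $t_k \searrow t_0$ and $x_k \to x_\infty \in \overline{\D}$. Since weak solutions lie in $H^1_{\text{loc}}([0,T); L^2(\Sp^1)) \subset C([0,T); L^2(\Sp^1))$, the boundary data satisfy $u(t_k) \to u(t_0)$ in $L^2(\p\D)$; by $L^2$-boundedness of the Poisson integral the harmonic extensions converge in $L^2(\D)$, and combined with the uniform $H^1(\D)$-bound from the second preliminary fact this upgrades (along a further subsequence) to $u(t_k) \rightharpoonup u(t_0)$ weakly in $H^1(\D)$.

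Viewing $\mu_k := \abs{\nabla u(t_k)}^2\,\dd x$ as Radon measures on the compact set $\overline{\D}$, their total masses are bounded by $2E(u(t_0)) + 2\ep - 4\delta$, so after a further subsequence, $\mu_k \rightharpoonup^\ast \mu$ weak-$\ast$ in $\mathcal{M}(\overline{\D})$. Testing the weak $H^1$-convergence against smooth nonnegative cutoffs shows $\mu \geqs \abs{\nabla u(t_0)}^2\,\dd x$ on $\D$ in the sense of measures, so the defect $\nu := \mu - \abs{\nabla u(t_0)}^2\,\dd x$ (with $\abs{\nabla u(t_0)}^2\,\dd x$ extended by zero to $\p\D$) is nonnegative on $\overline{\D}$ with $\nu(\overline{\D}) \leqs 2\ep - 4\delta$. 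For any $\rho > 0$ and $k$ large, $B_{r_k}(x_k) \subset \overline{B_\rho(x_\infty)}$ gives $\mu_k(\overline{B_\rho(x_\infty)}) > 2\ep$; the Portmanteau inequality for closed sets yields $\mu(\overline{B_\rho(x_\infty)}) \geqs 2\ep$. Letting $\rho \to 0$ gives $\mu(\{x_\infty\}) \geqs 2\ep$, and since $\abs{\nabla u(t_0)}^2\,\dd x$ is atomless, $\nu(\{x_\infty\}) \geqs 2\ep$, contradicting $\nu(\overline{\D}) \leqs 2\ep - 4\delta$.

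The main obstacle I anticipate is the case $x_\infty \in \p\D$, where the harmonic extensions $u(t_k)$ do not enjoy automatic local regularity near the concentration point, so pointwise or local strong $H^1$-convergence cannot be invoked directly. The concentration–compactness framework on the compact set $\overline{\D}$ bypasses this by working entirely at the level of the gradient measures, for which weak-$\ast$ convergence and Portmanteau apply uniformly regardless of whether the concentration point sits in the interior or on the boundary.
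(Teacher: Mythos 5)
Your argument is correct. The essential analytic ingredient is the same as in the paper: both proofs pass from the time-continuity of $t\mapsto u(t)$ in $L^2$ together with the uniform energy bound supplied by the $\limsup$ hypothesis to weak $H^1(\D)$-convergence $u(t_k)\rightharpoonup u(t_0)$ of the harmonic extensions, and then exploit lower semicontinuity of localized Dirichlet energy. Where you differ is in how the contradiction is organized. The paper first fixes a finite covering $\{B_r(x_i)\}$ of $\overline{\D}$ with $E(u(t_0);B_{2r}(x_i))\leqs\rho/2$, so the contradiction hypothesis involves a \emph{fixed} radius and finitely many centres; the contradiction is then a three-line computation using lower semicontinuity on the complement $\D\setminus B_{2r}(x_1)$, with no measure theory. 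You instead negate the conclusion directly, which forces you to handle shrinking radii $r_k\to 0$, and you do so by passing to the defect measure $\nu=\mu-\abs{\nabla u(t_0)}^2\dd x$ and showing it acquires an atom of mass $\geqs 2\ep$ at $x_\infty$ while its total mass is $\leqs 2\ep-4\delta$ --- numerically the same inequality as the paper's, but reached via weak-$\ast$ compactness of Radon measures and the Portmanteau inequality for closed sets. Your version is the standard concentration--compactness formulation and, as you note, is insensitive to whether $x_\infty$ lies in $\D$ or on $\p\D$ (the paper's covering argument is equally insensitive to this, since it only ever integrates over $\Omega\cap\D$); the paper's version is more elementary. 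One cosmetic point: the negation of the conclusion yields triples with $r=1/k$ rather than $r_k\in(0,1/k)$, but since enlarging the ball only increases the energy this changes nothing.
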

	\begin{proof}
		Let $\rho > 0$ satisfy
		\begin{equation*}
			\limsup\limits_{s\searrow t_0} E(u(s)) \leqs E(u(t_0)) + \ep - \rho.
		\end{equation*}
		Then we select a covering $\{B_{r}(x_i): 1 \leqs i \leqs m\}$ of $\overline{\D}$ satisfying
		\begin{equation*}
			E(u(t_0);B_{2r}(x_i)) \leqs \hf\rho
		\end{equation*}
		for each $i$. We now claim that there exists $t_1 > t_0$ such that for all $t \in [t_0,t_1]$ and for all indices $i$,
		\begin{equation*}
			E(u(t);B_{2r}(x_i)) \leqs \ep
		\end{equation*}
		Arguing by contradiction, suppose that there is a sequence of times $t_k \searrow t_0$ and a sequence of indices $i_k$ such that
		\begin{equation*}
			E(u(t_k);B_{2r}(x_{i_k})) > \ep
		\end{equation*}
		By passing to a subsequence, we assume without loss of generality that $i_k = 1$ and then estimate
		\begin{align}
			\ep &\leqs \limsup\limits_{k \rightarrow \infty}E(u(t_k);B_{2r}(x_1))\nonumber\\
			&= \limsup\limits_{k \rightarrow \infty}\left(E(u(t_k)) - E(u(t_k);\D\setminus B_{2r}(x_1)) \right)\nonumber\\
			&= \limsup\limits_{k \rightarrow \infty}E(u(t_k)) - \liminf\limits_{k \rightarrow \infty}E(u(t_k);\D\setminus B_{2r}(x_1))\nonumber\\
			&\leqs E(u(t_0)) + \ep - \rho - \liminf\limits_{k \rightarrow \infty}E(u(t_k);\D\setminus B_{2r}(x_1))\label{eq_liminf_line}
		\end{align}
		with the final inequality following from the choice of $\rho$. Next we estimate the $\liminf$ term. For this, we claim that along any sequence $\tilde{t}_k \rightarrow t_0$ and for any subset $\Omega\subs\D$,
		\begin{equation}\label{ineq_liminf_est}
			\liminf\limits_{k \rightarrow \infty}E(u(\tilde{t}_k);\Omega) \geqs E(u(t_0);\Omega).
		\end{equation}
		To prove this, we note that as $u$ is a weak solution, $u(\tilde{t}_k)$ is a bounded sequence in $H^{\frac{1}{2}}(\p\D;N)$ and hence the harmonic extensions form a bounded sequence in $H^1(\D;\R^n)$, and consequently also in $H^1(\Omega;\R^n)$. Therefore, on passing to a subsequence we can assume that
		\begin{align*}
			u(\tilde{t}_k) &\rightarrow u_\infty \text{ strongly in } L^2(\Omega;\R^n)\\
			\nabla u(\tilde{t}_k) &\rightharpoonup \nabla u_\infty \text{ weakly in } L^2(\Omega;\R^n)
		\end{align*}
		We have that $u(\tilde{t}_k) \rightarrow u(t_0)$ in $L^2(\Omega;\R^n)$ since $t \mapsto u(t)$ is continuous as a map from $[0,T]$ to $L^2(\D;\R^n)$. Hence $u_\infty = u(t_0)$, and so \eqref{ineq_liminf_est} follows by weak convergence of the gradients.
		
		Returning to \eqref{eq_liminf_line}, by choosing $\tilde{t}_k = t_k$ and $\Omega = \D\setminus B_{2r}(x_1)$, we get
		\begin{align*}
			\ep &\leqs E(u(t_0)) + \ep - \rho - \liminf\limits_{k \rightarrow \infty}E(u(t_k);\D\setminus B_{2r}(x_1))\\
			& \leqs E(u(t_0)) + \ep - \rho - E(u(t_0);\D\setminus B_{2r}(x_1))\\
			&= E(u(t_0);B_{2r}(x_1)) + \ep - \rho\\
			&\leqs \ep - \frac{1}{2}\rho
		\end{align*}
		which gives the desired contradiction. Since any ball of radius $r$ lies in $B_{2r}(x_i)$ for some $i$, we are done.
	\end{proof}
	We now use this to finish the proof of Theorem \ref{thm_uniqueness_new_1} by noting that for each $t_0 \in [0,T)$, on choosing $\ep=\ep_0$, Lemma \ref{lem_no_energy_Concentration} gives us a $t_1 > t_0$ and an $r>0$ such that
	\begin{equation*}
		\sup\limits_{x \in \D,t_0\leqs t \leqs t_1}E(u(t); B_r(x)) \leqs \ep_0.
	\end{equation*}
	Hence we can apply Proposition \ref{prop_quantative_static_H1_regularity_global} for almost every $t \in [t_0,t_1]$ and with this $r$ to get
	\begin{equation*}
		\norm{\p_s u(t)}_{L^2(\p\D;\R^n)}^2 \leqs C\left(\norm{\p_t u(t)}_{L^2(\p\D;\R^n)}^2 + E(u(t))\right)
	\end{equation*}
	for a constant $C$ depending only on $r$ and $\ep_0$. We can integrate over $[t_0,t_1]$ to get $u \in H^1(\p\D\times[t_0,t_1];\R^n)$. Hence we can apply Theorem \ref{thm_Struwe_uniqueness_weakened} to get that $u$ is the almost smooth solution with initial data $u(t_0)$ on $[t_0,t_1]$. Since this applies to all $t_0 \in [0,T)$ and we can always continue the flow uniquely by extracting the unique weak limit in $H^\hf(\p\D;N)$, $u$ must be the almost smooth solution on all of $[0,T)$.
\end{proof}

\section{Proof of Theorem \ref{thm_uniqueness_new_2}}\label{sec_thm_2}
Key to the proof of Theorem \ref{thm_uniqueness_new_2} is the bubbling analysis contained in Section 8 of \cite{Struwe_Plateau_Paper}, where Struwe obtained results on the behaviour of solutions of the Plateau flow approaching a singularity forwards in time. In essence, this involves rescaling by a factor $r_k$ in both space and time around the singularity and then selecting a time sequence such that these rescaled maps converge suitably. What is clear from Struwe's proofs however is that this only depends on the Plateau flow equation via estimates of the form \eqref{ass_static_bubble_energy_conc} and \eqref{ass_static_bubble_normal_control}. Therefore it is possible, and useful for us, to distil from Struwe's work the following more flexible proposition, which gives sufficient conditions for the formation of a half-harmonic bubble along a sequence of maps. 

\begin{prop}[{\cite[Section 8]{Struwe_Plateau_Paper}}]\label{prop_bubble_formation}
	Suppose that $u_k: \p\D \rightarrow N$ is a sequence of smooth maps which are extended harmonically to the interior of $\D$ and which have uniformly bounded energy. Suppose that there are sequences $x_k \in \overline{\D}$ and $r_k \searrow 0$ and a constant $\delta>0$ such that
	\begin{align}
		E(u_k;B_{r_k}(x_k)) \geqs \delta &\text{ for all } k\label{ass_static_bubble_energy_conc}\\
		r_k^{\frac{1}{2}}\norm{P_{u_k}(\p_{\nu} u_k)}_{L^2(\p \D)} \rightarrow 0 &\text{ as } k \rightarrow 0.\label{ass_static_bubble_normal_control}
	\end{align}
	Then there exists a subsequence, a non-constant half-harmonic map $\bar{u} \in H^{\hf}(\p\D;N)$ and a sequence of smooth conformal bijections $\Phi_k:\overline{\D} \rightarrow \overline{\D}$ converging weakly in $H^1(\D)$ to a constant map taking value $x_0 \in \p\D$ such that
	\begin{align}
		u_k \circ \Phi_k &\rightharpoonup \bar{u} \text{ weakly in } H^1(\overline{\D})\label{eq_bubble_weak_con}\\
		u_k \circ \Phi_k &\rightarrow \bar{u} \text{ strongly in } H_{\text{loc}}^1(\overline{\D}\setminus\{p\})\label{eq_bubble_strong_con}
	\end{align}
	where $p \in \p\D$.
\end{prop}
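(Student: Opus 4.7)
The strategy is the standard bubbling argument via conformal rescaling, specialised to the disk-boundary setting. Since the Dirichlet energy is conformally invariant in two dimensions and the half-harmonic equation transforms nicely under Möbius automorphisms of $\D$, the scale-invariant hypotheses \eqref{ass_static_bubble_energy_conc} and \eqref{ass_static_bubble_normal_control} are exactly the data required to extract a non-trivial half-harmonic limit by blowing up at the concentration region. The proposition is essentially Struwe's Section 8 analysis repackaged so that only these two scale-invariant inputs are used, rather than the full Plateau flow structure.

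First I would construct the rescalings. After passing to a subsequence, assume $x_k \to x_\infty \in \overline{\D}$ and denote by $\pi(x_k) \in \p\D$ its projection to the boundary. Choose $\Phi_k:\overline{\D} \to \overline{\D}$ of the form $\Phi_k(z) = e^{i\theta_k}\frac{z + a_k}{1 + \bar{a}_k z}$ with $1 - |a_k| \asymp r_k$ and $\theta_k$ selected so that $\Phi_k(1)$ is close to $\pi(x_k)$. This ensures $\Phi_k$ maps a fixed neighborhood of $1 \in \p\D$ conformally onto a neighborhood of $\pi(x_k)$ of scale $\sim r_k$ containing $B_{r_k}(x_k) \cap \overline{\D}$, and $\Phi_k$ degenerates weakly in $H^1(\D)$ to the constant map $x_0 := \lim \pi(x_k) \in \p\D$. (Interior bubbling is ruled out because $u_k$ is the harmonic extension of a uniformly $H^\hf(\p\D)$-bounded boundary datum, so any energy concentration forces $\mathrm{dist}(x_k, \p\D) = O(r_k)$.) Setting $v_k := u_k \circ \Phi_k$, conformal invariance of the Dirichlet integral gives $E(v_k) = E(u_k) \leqs C$, so a subsequence converges weakly in $H^1(\D)$ to some $\bar u$ and strongly in $L^2(\p\D)$ by Rellich.

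Next I would show that $\bar u$ is half-harmonic. Pullback by the conformal self-map gives $P_{v_k}(\p_\nu v_k)(s) = |\Phi_k'(s)|(P_{u_k}(\p_\nu u_k))(\Phi_k(s))$ on $\p\D$. The only point at which $|\Phi_k'|$ is unbounded is $p := -e^{i\theta_\infty} \in \p\D$, so on any compact set $K \subs \p\D \setminus \{p\}$ one has $\sup_K |\Phi_k'| \lesssim r_k$. A change of variables then yields
\begin{equation*}
\norm{P_{v_k}(\p_\nu v_k)}_{L^2(K)}^2 \leqs \bigl(\sup_K |\Phi_k'|\bigr)\norm{P_{u_k}(\p_\nu u_k)}_{L^2(\p\D)}^2 \lesssim r_k \norm{P_{u_k}(\p_\nu u_k)}_{L^2(\p\D)}^2 \to 0
\end{equation*}
by \eqref{ass_static_bubble_normal_control}. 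Combined with strong $L^2(\p\D)$ convergence $v_k \to \bar u$ (needed to pass to the limit in the nonlinear projector $P_{v_k}$), this gives $P_{\bar u}(\p_\nu \bar u) = 0$ on $\p\D \setminus \{p\}$, and a standard removable-singularity argument for $H^\hf$ half-harmonic maps extends this to all of $\p\D$.

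The main obstacle, and the technical core of the argument, is the strong $H^1_{\text{loc}}(\overline{\D}\setminus\{p\})$ convergence \eqref{eq_bubble_strong_con} together with the non-triviality of $\bar u$. Following Struwe, I would define the concentration set $\Sigma \subs \overline{\D}$ of points $z$ at which $\limsup_k E(v_k; B_\rho(z)) \geqs \ep_0$ for every $\rho > 0$, where $\ep_0$ is the small-energy threshold from Proposition \ref{prop_quantative_static_H1_regularity_global}. The uniform energy bound forces $\Sigma$ to be finite, and by selecting the blow-up scale greedily around $x_k$ (taking $r_k$ to be essentially the smallest scale on which $\delta$ energy accumulates) one arranges that $\Sigma \subs \{p\}$. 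Away from $\Sigma$, the vanishing-inhomogeneity bound above combined with Proposition \ref{prop_quantative_static_H1_regularity_global} applied to the rescaled equation gives uniform local $H^1$ bounds, and Rellich upgrades weak to strong $H^1_{\text{loc}}$ convergence. Finally, non-triviality of $\bar u$ follows because \eqref{ass_static_bubble_energy_conc} translates to $E(v_k; \Phi_k^{-1}(B_{r_k}(x_k))) \geqs \delta$ on a region of uniformly positive diameter lying at positive distance from $p$, and the strong $H^1_{\text{loc}}$ convergence preserves this energy in the limit.
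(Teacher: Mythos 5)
Your proposal follows essentially the same route as the paper, which does not reprove this proposition but distils it from Struwe's Section 8: conformal blow-up at the concentration scale via Möbius automorphisms of $\D$, conformal covariance of the Dirichlet-to-Neumann operator to transfer the vanishing-tension hypothesis \eqref{ass_static_bubble_normal_control}, $\ep$-regularity to get strong convergence away from a single boundary point $p$, and extraction of the smallest-scale ("top level") bubble to obtain \eqref{eq_bubble_strong_con} and non-triviality, exactly as the paper's remark after the proposition indicates. The one detail to watch is that the greedily chosen smallest concentration scale in your last paragraph need not be comparable to the given $r_k$, so once $\Phi_k$ is built at the smaller scale the set $\Phi_k^{-1}(B_{r_k}(x_k))$ may no longer stay away from $p$; non-triviality should instead be read off from the fixed-size region carrying the threshold energy at the selected scale.
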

Before giving the proof of Theorem \ref{thm_uniqueness_new_2}, we briefly unpack the geometry behind Proposition \ref{prop_bubble_formation}. The condition \eqref{ass_static_bubble_energy_conc} is saying that energy is concentrating in smaller and smaller regions of $\overline{\D}$ and the condition \eqref{ass_static_bubble_normal_control} is the analogue of the tension bounds found in the bubbling theory of almost harmonic maps. The mappings $\Phi_k$ are mapping more and more of the disc $\D$ into a small region around $x_0$, which is where the bubble is forming. The region away from the bubble formation is collapsed by $\Phi_k^{-1}$ to the point $p$ in the limit. For our purposes, the weak convergence \eqref{eq_bubble_weak_con} is sufficient, but it is straightforward to get the stronger convergence \eqref{eq_bubble_strong_con} by ensuring that the $\Phi_k$ are extracting the top level bubble which is forming, i.e. the bubble forming at the smallest scale.
 
\begin{proof}[Proof of Theorem \ref{thm_uniqueness_new_2}]
	Let $u$ be a weak solution of the Plateau flow satisfying \eqref{ineq_bubble_energy_jump_condition} and \eqref{est_finite_bad_jumps}. The strategy of the proof is then to show that for each $s \in S$, there is some interval $[s,\tau]$ on which $u$ equals the almost smooth solution with initial data $u(s)$. Since on $[0,T)\setminus S$, the condition \eqref{ineq_small_energy_jump_condition} for Theorem \ref{thm_uniqueness_new_1} holds, this is sufficient to conclude that $u$ is the almost smooth solution with initial data $u(0)$ on all of $[0,T)$.
	
	So, we assume without loss of generality that $0 \in S$ and aim to construct an interval $[0,\tau]$ as outlined above. As $S$ has no accumulation points, there is then some time $T_1 > 0$ such that $(0,T_1] \subs [0,T)\setminus S$. Then we know from Theorem \ref{thm_uniqueness_new_1} that for any $0 < t_0 < T_1$, $u$ must equal the almost smooth solution with initial data $u(t_0)$ on $[t_0,T_1]$. By reducing $T_1$ to before the first positive singular time, we then additionally assume that $u$ is smooth on $[t_0,T_1]$. 
	
	Now, we claim that there exists some $r>0$ and $t_1 \in (0,T_1]$ such that for all $t \in [0,t_1]$ and for all $x \in \overline{\D}$
	\begin{equation*}
		E(u(t);B_r(x)) \leqs \ep_0.
	\end{equation*}
	Once this claim is proved, we can repeat the argument from the proof of Theorem \ref{thm_uniqueness_new_1} to get that $u \in H^1(\p\D\times[0,t_1];\R^n)$ and so apply Struwe's uniqueness theorem, Theorem \ref{thm_Struwe_uniqueness_weakened}.
	
	To prove this claim, we argue by contradiction. So suppose that there exist sequences $t_k \searrow 0$, $r_k \rightarrow 0$ such that
	\begin{equation*}
		\sup\limits_{x \in \overline{\D}}E(u(t_k);B_{r_k}(x)) > \ep_0.
	\end{equation*}
	We can then find a sequence $x_k$ such that for each $k$,
	\begin{equation*}
		E(u(t_k);B_{r_k}(x_k)) > \ep_0.
	\end{equation*}
	By passing to a subsequence, we assume that $x_k \rightarrow x_0$. We then fix $\rho>0$ which satisfies
	\begin{equation*}
		\limsup\limits_{s\searrow 0} E(u(s)) \leqs E(u(0)) + \ep^* - \rho
	\end{equation*}
	and select $r_0>0$ such that
	\begin{equation*}
		E(u(0);B_{r_0}(x_0)) \leqs \frac{1}{2}\rho
	\end{equation*}

	Next, we construct a new sequence of times, $\tilde{t}_k \searrow 0$, along which we can apply Proposition \ref{prop_bubble_formation} to extract a half-harmonic bubble. First, in order to satisfy the bound \eqref{ass_static_bubble_normal_control} on $P_{u_k}(\p_\nu u_k)$, we use that $\p_t u \in L^2(\p\D \times [0,T))$ and hence
	\begin{equation*}
		\int_{t_k}^{t_k+r_k}\int_{\p\D}\abs{P_u(\p_{\nu} u)}^2\dd s\dd t \rightarrow 0
	\end{equation*}
	as $k \rightarrow \infty$. Therefore we can choose some $\tilde{t}_k \in [t_k,t_k + r_k]$ satisfying
	\begin{equation*}
		\int_{\p\D}\abs{P_u(\p_{\nu} u(\tilde{t}_k))}^2\dd s\leqs \frac{1}{r_k}\int_{t_k}^{t_k+r_k}\int_{\p\D}\abs{P_u(\p_{\nu} u)}^2\dd s\dd t
	\end{equation*}
	to obtain \eqref{ass_static_bubble_normal_control}. To see that the energy concentration condition \eqref{ass_static_bubble_energy_conc} still holds for this new sequence of times, we can use the following lemma, which is extracted from analysis done in Section 8 of \cite{Struwe_Plateau_Paper}.
	\begin{lem}[{\cite[Section 8]{Struwe_Plateau_Paper}}]\label{lem_local_energy_bound}
		Let $u$ be a weak solution to the Plateau flow which is smooth on a time interval $[t_0,t_1]$. Let $\ph$ be a smooth cut-off function which is supported on $B_{2r}(x_0)$ and is identically $1$ on $B_r(x_0)$. Then
		\begin{equation*}
			\abs{\int_\D \abs{\nabla u (t_1)}^2\ph^2 \dd x - \int_\D \abs{\nabla u (t_0)}^2\ph^2 \dd x} \leqs C\int_{t_0}^{t_1}\int_{\p\D}\abs{\p_t u}^2 \dd s\dd t
		\end{equation*} 
		where $C$ depends only on the upper bound on the energy and $r\sup\limits_{x \in \overline{\D}}\abs{\nabla \ph(x)}$.
	\end{lem}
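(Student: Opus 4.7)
The plan is to differentiate the localised energy $f(t) := \int_\D \varphi^2 |\nabla u(t)|^2 \, \dd x$ in time and control its derivative in terms of the boundary $L^2$-norm of $\p_t u$. Exploiting that $u(t)$ is harmonic in $\D$ (being the harmonic extension of its boundary values) and smooth on $[t_0,t_1]$, I would expand $\nabla u \cdot \nabla(\varphi^2 \p_t u) = \varphi^2 \nabla u \cdot \nabla \p_t u + 2\varphi (\p_t u) \nabla\varphi \cdot \nabla u$, integrate by parts, and use $\Delta u = 0$ in $\D$, yielding
\begin{equation*}
f'(t) = 2\int_{\p\D} \varphi^2 \p_\nu u \cdot \p_t u \, \dd s - 4\int_\D \varphi \, (\nabla\varphi \cdot \nabla u) \cdot \p_t u \, \dd x.
\end{equation*}

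The boundary term is handled using the Plateau flow equation on $\p\D$: since $\p_t u = -P_u(\p_\nu u) \in T_u N$ is tangent to $N$, it is orthogonal to the normal component of $\p_\nu u$, so $\p_\nu u \cdot \p_t u = -|\p_t u|^2$ on $\p\D$. The boundary contribution therefore becomes $-2\int_{\p\D} \varphi^2 |\p_t u|^2 \, \dd s$, which is sign-definite and already of the desired form.

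The main obstacle is controlling the bulk term $J(t) := \int_\D \varphi \, (\nabla\varphi \cdot \nabla u) \cdot \p_t u \, \dd x$, which is where the harmonicity of the extension plays a crucial role. Since each $u(t)$ is harmonic in $\D$ and $u$ is smooth in $t$, differentiating in $t$ shows that $\p_t u$ is also harmonic in $\D$ with boundary values in $L^2(\p\D)$; the Poisson integral representation then yields $\norm{\p_t u}_{L^2(\D)} \leqs C \norm{\p_t u}_{L^2(\p\D)}$ with $C$ an absolute constant. Combined with Cauchy--Schwarz, the global energy bound $\norm{\nabla u}_{L^2(\D)}^2 \leqs 2 E_0$, and Young's inequality, this would bound $4|J(t)|$ by $\norm{\p_t u(t)}_{L^2(\p\D)}^2$ together with terms controlled by constants depending only on $E_0$ and $r\sup|\nabla\varphi|$.

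Combining the boundary and bulk estimates and integrating over $[t_0, t_1]$ then yields the claim. The main technical point is the bulk term: a direct Cauchy--Schwarz produces a factor of $\norm{\p_t u}_{L^2(\p\D)}$ rather than its square, so the harmonicity of $\p_t u$ is essential to trade the bulk $L^2$-norm for the boundary one, and Young's inequality must be applied carefully, using only the geometric parameters $E_0$ and $r\sup|\nabla\varphi|$, to end up with the correct quadratic dependence on $\norm{\p_t u}_{L^2(\p\D)}$ on the right-hand side.
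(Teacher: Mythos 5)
Your setup is the right one and matches the standard derivation (differentiate the localised energy, integrate by parts using harmonicity of the extension, and use $\p_t u=-P_u(\p_\nu u)$ to turn the boundary term into $-2\int_{\p\D}\varphi^2|\p_t u|^2\,\dd s$; the orthogonality computation $\p_\nu u\cdot\p_t u=-|\p_t u|^2$ is correct). The gap is exactly at the step you flag as the main technical point, and it is not repairable by Young's inequality. Your bound for the bulk term has the form $|J(t)|\leqs A\,\norm{\p_t u(t)}_{L^2(\p\D)}$ with $A$ a constant depending on $E_0$ and $\sup|\nabla\ph|$. Young's inequality applied to this product gives $\ep\norm{\p_t u(t)}_{L^2(\p\D)}^2+\ep^{-1}A^2$, and the second term is a genuine constant in time: integrating over $[t_0,t_1]$ produces an additive contribution proportional to $t_1-t_0$ which is \emph{not} controlled by $\int_{t_0}^{t_1}\int_{\p\D}|\p_t u|^2\,\dd s\,\dd t$. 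There is no way to absorb it, because the good boundary term $-2\int_{\p\D}\varphi^2|\p_t u|^2\,\dd s$ lives on $\p\D$ while $J$ is an interior integral, and a term linear in $\norm{\p_t u}$ cannot be dominated by one quadratic in $\norm{\p_t u}$ when $\norm{\p_t u}$ is small (e.g.\ for $u$ close to half-harmonic the left side of your estimate degenerates linearly, the right side quadratically). So as written your argument proves at best
\begin{equation*}
\abs{f(t_1)-f(t_0)}\leqs C\int_{t_0}^{t_1}\int_{\p\D}\abs{\p_t u}^2\,\dd s\,\dd t+C(t_1-t_0),
\end{equation*}
which is weaker than the stated lemma.

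The estimate extracted from Struwe's Section~8 does carry such a correction term, in analogy with the local energy inequality for harmonic map flow, and the right way to handle the bulk term is not to apply Young at all but to keep it in product form and use a quantitative boundary-layer estimate: for a harmonic function $w$ on $\D$ with $L^2$ boundary data one has $\norm{w}_{L^2(B_{2r}(x_0)\cap\D)}\leqs C r^{1/2}\norm{w}_{L^2(\p\D)}$ (via the non-tangential maximal function), not merely $\norm{w}_{L^2(\D)}\leqs C\norm{w}_{L^2(\p\D)}$. This yields
\begin{equation*}
\int_{t_0}^{t_1}\abs{J(t)}\,\dd t\leqs C\,\bigl(r\sup\abs{\nabla\ph}\bigr)\,E_0^{1/2}\,\Bigl(\tfrac{t_1-t_0}{r}\Bigr)^{1/2}\Bigl(\int_{t_0}^{t_1}\int_{\p\D}\abs{\p_t u}^2\,\dd s\,\dd t\Bigr)^{1/2},
\end{equation*}
which is what is actually needed: in the application in Section~\ref{sec_thm_2} the time interval $[t_k,\tilde t_k]$ has length at most $r_k$, so the factor $((t_1-t_0)/r)^{1/2}$ is bounded and the whole expression is $o(1)$. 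You should either prove the lemma in this corrected form (with the extra square-root term, which suffices for its use in the paper) or supply an argument for why the bulk term is genuinely quadratic in $\norm{\p_t u}_{L^2(\p\D)}$; the Young's inequality route cannot deliver the statement as claimed.
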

	Choosing $\ph_k$ to be centred at $x_0$ with $r = r_k$ and the time interval $[t_k,\tilde{t}_k]$, from Lemma \ref{lem_local_energy_bound} we get
	\begin{equation*}
		\int_{\D}\abs{\nabla u(\tilde{t}_k)}^2\ph_k^2 \dd x \geqs \int_{\D}\abs{\nabla u(t_k)}^2\ph_k^2 \dd x - o(1)
	\end{equation*}
	as $k \rightarrow \infty$. Hence using $\delta = \hf\ep_0$, we have that the sequence of maps $u(\tilde{t}_k)$ satisfy the conditions of Proposition \ref{prop_bubble_formation} and so let $\bar{u} \in H^1(\D;\R^n), \Phi_k \in C^\infty(\overline{\D};\overline{\D})$ be the resulting functions. By conformal invariance of the Dirichlet energy, we have
	\begin{equation*}
		E(u(\tilde{t}_k);B_{r_0}(x_0)) = E(u(\tilde{t}_k) \circ \Phi_k;\Phi_k^{-1}(B_{r_0}(x_0))).
	\end{equation*}
	Since $\Phi_k \rightharpoonup x_0$ in $H^1(\D)$, we have that $m(\D\setminus \Phi_k^{-1}(B_{r_0}(x_0))) \rightarrow 0$ as $k \rightarrow \infty$, $m$ being the Lebesgue measure. So for any fixed $\eta>0$, we can pass to a subsequence such that $\Omega = \bigcap\limits_{k\geqs 1}\Phi_k^{-1}(B_{r_0}(x_0))$ has $m(\Omega) \geqs \pi - \eta$. From this, we can use the weak convergence of $u(\tilde{t}_k)\circ\Phi_k \rightharpoonup \bar{u}$ in $H^1(\D)$ to estimate
	\begin{equation*}
		\liminf\limits_{k\rightarrow\infty}E(u(\tilde{t}_k);B_{r_0}(x_0)) \geqs \liminf\limits_{k\rightarrow\infty}E(u(\tilde{t}_k) \circ \Phi_k;\Omega)
		\geqs E(\bar{u};\Omega).
	\end{equation*}
	Since we can choose any $\eta>0$, we obtain
	\begin{equation*}
		\liminf\limits_{k\rightarrow\infty}E(u(\bar{t}_k);B_{r_0}(x_0)) \geqs E(\bar{u}) \geqs \ep^*
	\end{equation*}
	Combining this with the estimate \eqref{ineq_liminf_est} to relate $E(u(0))$ with $E(u(\tilde{t}_k))$, we obtain
	\begin{align*}
		E(u(0)) &\leqs E(u(0);\overline{\D}\setminus B_{r_0}(x_0)) + \frac{1}{2}\rho\\
		&\leqs \liminf\limits_{k \rightarrow \infty}E(u(\tilde{t}_k);\overline{\D}\setminus B_{r_0}(x_0)) + \frac{1}{2}\rho\\
		&\leqs \limsup\limits_{k \rightarrow \infty}E(u(\tilde{t}_k)) - \liminf\limits_{k \rightarrow \infty}E(u(\tilde{t}_k);B_{r_0}(x_0)) + \frac{1}{2}\rho\\
		&\leqs E(u(0)) - \hf\rho
	\end{align*}
	which gives the required contradiction. 
	
	From this, we argue exactly as in the proof of Theorem \ref{thm_uniqueness_new_1} to conclude that $u$ must in fact be equal to the almost smooth solution on $[0,t_1]$. This can be repeated for all times in the set $S$, and as discussed at the beginning of the proof, we can apply Theorem \ref{thm_uniqueness_new_1} away from $S$ to conclude that $u$ equals the almost smooth solution on all of $[0,T)$.
\end{proof}

\bibliographystyle{alpha}
\bibliography{Uniqueness_paper_preprint}

\end{document}